\theoremstyle{plain}
\newtheorem{theorem}{Theorem}
\newtheorem{lemma}{Lemma}
\newtheorem*{observation}{Theorem}
\newtheorem*{Helly}{Helly's theorem}
\theoremstyle{definition}
\newtheorem{example}{Example}
\DeclareMathOperator{\convex}{co}
\DeclareMathOperator{\interior}{int}
\DeclareMathOperator{\bd}{bd}
\newcommand{\R}{\mathbb{R}}
\newcommand{\mclass}[1]{{\mathbf#1}}
\newcommand{\interk}[2]{\Pi(#1,#2)}
\newcommand{\inter}[1]{\Pi(#1)}
\begin{document}

\title{Helly-type Theorems for Hollow Axis-aligned Boxes}
\author{Konrad J.\ Swanepoel}
\address{Department of Mathematics and Applied Mathematics 
\\ University of Pretoria \\ Pretoria 0002 \\ South Africa}
\subjclass{Primary 52A35} 
\keywords{Helly-type Theorem, box, cube, hypercube}
\email{\texttt{konrad@math.up.ac.za}}

\begin{abstract}
A {\em hollow axis-aligned box\/} is the boundary of the cartesian product of $d$ compact intervals in $\R^d$.
We show that for $d\geq 3$, if any $2^d$ of a collection of hollow axis-aligned boxes have non-empty intersection, then the whole collection has non-empty intersection; and if any $5$ of a collection of hollow axis-aligned rectangles in $\R^2$ have non-empty intersection, then the whole collection has non-empty intersection.
The values $2^d$ for $d\geq 3$ and $5$ for $d=2$ are the best possible in general.
We also characterize the collections of hollow boxes which would be counterexamples if $2^d$ were lowered to $2^d-1$, and $5$ to $4$, respectively.
\end{abstract}

\maketitle

\section{General Notation and Definitions}
We denote the cardinality of a set $S$ by $\#S$.
Let $\interk{\mclass{S}}{k}$ denote the property that any subcollection of $\mclass{S}$ of at most $k$ sets has non-empty intersection (where $k$ is any positive integer), and $\inter{\mclass{S}}$ the property that $\mclass{S}$ has non-empty intersection.
For any set $S\subseteq\R^d$, we denote the convex hull, interior and boundary by $\convex S, \interior S$ and $\bd S$, respectively.
An {\em axis-aligned box} in $\R^d$ is the cartesian product of $d$ compact intervals, i.e.\ a set of the form
\[ \prod_{i=1}^d[a_i,b_i] = \bigl\{(x_1,\dots,x_d)\in\R^d: a_i\leq x_i\leq b_i, i=1,\dots,d\bigr\}, \quad (a_i<b_i). \]
An {\em axis-aligned hollow box} in $\R^d$ is the boundary of a box, i.e.\ a set of the form
\[ \bd \prod_{i=1}^d[a_i,b_i], \quad (a_i<b_i). \]
In the rest of the paper, the word {\em axis-aligned} is implicit whenever we refer to boxes or hollow boxes.
In the next section we state our results (Theorems~\ref{rectangles} and \ref{hollow}), together with examples showing that they are the best possible.
In Section~\ref{combinatorics} we derive a combinatorial lemma needed in the proofs of these theorems in Section~\ref{proofs}.

\section{Helly-type Theorems}\label{intro}
A Helly-type theorem may be loosely described as an analogue of
\begin{Helly}[\cite{Helly}]
Let $\mclass{S}$ be a collection of convex sets in $\R^d$ that is finite, or contains at least one compact set.
Then 
\[
   \interk{\mclass{S}}{d+1}\implies\inter{\mclass{S}}.
\]\qed
\end{Helly}

There is an abundance of literature on Helly-type theorems; see the surveys \cite{DGK, Eckhoff, GPW}.
Most of these analogues consider collections of {\em convex} sets, exactly as in Helly's Theorem.
Here are two examples where non-convex sets are considered.

\begin{observation}[Motzkin \cite{Motzkin,DF}]
Let $\mclass{S}$ be a collection of sets in $\R^d$, each of which is the set of common zeroes of a set of real polynomials in $d$ variables of degree at most $k$.
Then 
\[
  \interk{\mclass{S}}{\tbinom{d+k}{k}}\implies\inter{\mclass{S}}.
\]\qed
\end{observation}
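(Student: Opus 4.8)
The plan is to dualise, working not with the zero sets in $\R^d$ but in the finite-dimensional vector space $P$ of all real polynomials in $d$ variables of degree at most $k$. Its dimension is exactly $N:=\tbinom{d+k}{k}$, a basis being the monomials $x_1^{e_1}\cdots x_d^{e_d}$ with $e_1+\dots+e_d\leq k$. For each point $x\in\R^d$ the evaluation map $\mathrm{ev}_x\colon P\to\R$, $p\mapsto p(x)$, is linear and nonzero (it sends the constant polynomial $1$ to $1$), so it is a nonzero element of the dual space $P^{\ast}$. For each $S\in\mclass S$ write $S$ as the set of common zeroes of some family of polynomials of degree at most $k$, and let $L_S\subseteq P$ be the linear span of that family. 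Then for every $x\in\R^d$ we have $x\in S$ if and only if $\mathrm{ev}_x(q)=0$ for all $q\in L_S$, i.e.\ if and only if $\mathrm{ev}_x\in L_S^{\perp}$, the annihilator of $L_S$ in $P^{\ast}$. Consequently $\inter{\mclass S}$ holds precisely when the subspace $\bigcap_{S\in\mclass S}L_S^{\perp}$ of $P^{\ast}$ contains some functional of the form $\mathrm{ev}_x$, and likewise $\interk{\mclass S}{m}$ translates into the same statement restricted to every $m$-element subfamily.

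The second ingredient is an elementary fact about an $N$-dimensional vector space: the intersection of any family of linear subspaces equals the intersection of at most $N$ members of the family, because adjoining subspaces one at a time can strictly shrink the running intersection at most $N$ times. Applied to the subspaces $L_S^{\perp}$ of $P^{\ast}$, this produces sets $S_1,\dots,S_r\in\mclass S$ with $r\leq N$ and $L_{S_1}^{\perp}\cap\dots\cap L_{S_r}^{\perp}=\bigcap_{S\in\mclass S}L_S^{\perp}$.

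It remains to apply the hypothesis. Since $r\leq N=\tbinom{d+k}{k}$, the property $\interk{\mclass S}{\tbinom{d+k}{k}}$ furnishes a point $x\in S_1\cap\dots\cap S_r$. By the translation above $\mathrm{ev}_x\in L_{S_i}^{\perp}$ for each $i$, hence $\mathrm{ev}_x\in\bigcap_{S\in\mclass S}L_S^{\perp}$, hence $\mathrm{ev}_x\in L_S^{\perp}$ for every $S\in\mclass S$, that is, $x\in S$ for every $S\in\mclass S$. Thus $\inter{\mclass S}$.

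The one substantive idea is the passage to the dual: once the problem is recast as a question about a family of linear subspaces of the $N$-dimensional space $P^{\ast}$, the fact that the relevant ``Helly number'' equals the dimension is immediate. The tempting alternative --- substituting the $N-1$ nonconstant monomials to send each zero set into $\R^{N-1}$ and then quoting Helly's theorem there --- fails, since the images of the zero sets need not be convex; it would also impose a superfluous finiteness or compactness hypothesis, whereas the dual argument needs none, the only finiteness used being $\dim P = N < \infty$. The degenerate cases cause no trouble: $S=\emptyset$ is already excluded by $\interk{\mclass S}{1}$, and $k=0$ gives $N=1$ and a triviality.
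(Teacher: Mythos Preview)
Your proof is correct. Note, however, that the paper does not actually prove this statement: it is quoted as a known result of Motzkin (with the references \cite{Motzkin,DF}) and marked with \qed\ immediately after the display, so there is no ``paper's own proof'' to compare against.

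That said, your argument is essentially the classical one. The linearisation step --- passing to the $N$-dimensional space $P$ of polynomials of degree at most $k$ and observing that membership $x\in S$ becomes the linear condition $\mathrm{ev}_x\in L_S^{\perp}$ --- is exactly Motzkin's idea, and the fact that an intersection of linear subspaces of an $N$-dimensional space is already achieved by at most $N$ of them is the standard finishing observation. Your remark that the naive Veronese-embedding-plus-Helly approach fails because the images $V(\R^d)\cap H_S$ need not be convex is well taken; the usual salvage is to apply the dimension argument to the affine flats $H_S$ themselves rather than to their intersections with the Veronese image, which is really the same computation you carry out in $P^\ast$.
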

\begin{observation}[Maehara \cite{Maehara,Frankl}]
Let $\mclass{S}$ be a collection of at least $d+3$ euclidean spheres in $\R^d$.
Then 
\[
  \interk{\mclass{S}}{d+1}\implies\inter{\mclass{S}}.
\]\qed
\end{observation}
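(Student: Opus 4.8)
The plan is to move to $\R^{d+1}$ by the paraboloid lifting, which turns each sphere into a hyperplane section of a fixed paraboloid, and then to isolate the single configuration that obstructs the argument; that configuration is exactly where the hypothesis $\#\mclass{S}\ge d+3$ gets used.

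First I would set up $\phi\colon\R^d\to\R^{d+1}$, $\phi(x)=(x,\|x\|^2)$, whose image is the paraboloid $P=\{(y,t):t=\|y\|^2\}$. Writing $S_i=\{x:\|x\|^2-2c_i\cdot x+\gamma_i=0\}$, one has $x\in S_i$ iff $\phi(x)$ lies on the hyperplane $H_i=\{(y,t):t-2c_i\cdot y+\gamma_i=0\}$, and --- the key identity --- every point $(y,t)\in H_i$ satisfies $\|y\|^2-t=\|y-c_i\|^2-r_i^2$, the power of $y$ with respect to $S_i$. Since $\phi$ is a bijection onto $P$ and $\bigcap_iS_i\ne\emptyset$ iff $P\cap\bigcap_iH_i\ne\emptyset$, the theorem is equivalent to: if $H_1,\dots,H_n$ $(n\ge d+3)$ are hyperplanes in $\R^{d+1}$ such that every $d+1$ of them have a common point on $P$, then all $n$ have a common point on $P$. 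By the identity, a common point $(y,t)$ of all the $H_i$ makes all powers $\|y-c_i\|^2-r_i^2$ equal to $\|y\|^2-t$, and lies on $P$ exactly when this common value is $0$; so it suffices to prove (i) $A:=\bigcap_iH_i\ne\emptyset$, and (ii) $A$ meets $P$.

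Part (ii) is quick once (i) is known: I would let $\rho$ be the rank of the normals of the $H_i$, pick $\rho$ of the $H_i$ with linearly independent normals, and enlarge this set to a sub-collection of exactly $d+1$ hyperplanes; since $A$ is nonempty and lies in every $H_i$, this sub-collection has intersection exactly $A$, so the corresponding $d+1$ spheres have common intersection $\phi^{-1}(A\cap P)$, which is nonempty by the hypothesis, and hence $A\cap P\ne\emptyset$. For part (i), $A=\emptyset$ means that the linear system defining $\bigcap_iH_i$ is inconsistent; a minimal inconsistent sub-system corresponds to a linearly independent family of coefficient vectors in $\R^{d+2}$, so it involves at most $d+2$ of the $H_i$. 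If such a minimal sub-collection has at most $d+1$ members, then those (at most $d+1$) spheres have empty intersection, contradicting $\interk{\mclass{S}}{d+1}$.

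The main obstacle is the remaining case, where the minimal inconsistent sub-collection has exactly $d+2$ members $H_1,\dots,H_{d+2}$. Here minimality makes the $d+2$ coefficient vectors a basis of $\R^{d+2}$, and from this one extracts, for each $k$, a single point $q_k=\bigcap_{i\ne k}H_i$ (which lies on $P$ by the hypothesis) and, for each pair $k\ne k'$, the line $\overline{q_kq_{k'}}=\bigcap_{i\ne k,k'}H_i$, which meets $P$ in exactly $\{q_k,q_{k'}\}$; writing $q_k=\phi(p_k)$, this becomes: $p_k\in S_i$ for all $i\ne k$, and $\bigcap_{i\le d+2,\,i\ne k,k'}S_i=\{p_k,p_{k'}\}$ for every pair $k\ne k'$. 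Now I would use $n\ge d+3$ to bring in a further sphere $S_j$, $j\ge d+3$: applying $\interk{\mclass{S}}{d+1}$ to $S_j$ together with the $d$ spheres $S_i$ $(i\le d+2,\ i\ne k,k')$ forces $p_k\in S_j$ or $p_{k'}\in S_j$ for every pair, so $S_j$ must contain at least $d+1$ of $p_1,\dots,p_{d+2}$; but any $d+1$ of these points already lie on one of $S_1,\dots,S_{d+2}$, and a sphere through $d+1$ affinely independent points is unique, so $S_j$ would coincide with that sphere --- contradicting that the spheres are distinct. I expect essentially all of the work, and the only real subtlety, to be in this case, including the degenerate sub-case in which the $d+1$ points in question are affinely dependent; there I would descend into their affine hull and argue that every member of the collection then passes through one common lower-dimensional sphere, which is again a contradiction.
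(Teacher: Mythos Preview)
The paper does not prove this statement at all: it is quoted from the literature (Maehara, with Frankl's simplification), and the \qed\ immediately following the display signals that no argument is supplied. There is therefore no ``paper's own proof'' to compare your proposal against; the theorem appears solely as motivation for the paper's actual results on hollow boxes.

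For what it is worth, your paraboloid-lift strategy --- turning spheres in $\R^d$ into hyperplanes in $\R^{d+1}$, reducing $\inter{\mclass{S}}$ to nonemptiness of $A=\bigcap_i H_i$ and $A\cap P$, and then isolating a minimal inconsistent sub-family of at most $d+2$ hyperplanes --- is essentially the argument in the cited references. One small improvement: your final ``degenerate sub-case'' disappears if you run the uniqueness step upstairs rather than downstairs. From the minimal inconsistency of $H_1,\dots,H_{d+2}$ you already have that the augmented rows $(a_i,b_i)$ form a basis of $\R^{d+2}$, and from this it follows that the points $q_1,\dots,q_{d+2}$ are affinely independent in $\R^{d+1}$. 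Hence any $d+1$ of them span a \emph{unique} hyperplane; since $S_j$ contains $d+1$ of the $p_k$, the hyperplane $H_j$ contains the corresponding $d+1$ of the $q_k$, forcing $H_j$ to coincide with one of $H_1,\dots,H_{d+2}$ and hence $S_j$ to coincide with one of the first $d+2$ spheres --- contradicting distinctness of the members of $\mclass{S}$. This avoids any appeal to affine independence of the $p_k$ in $\R^d$.
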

In both these theorems the sets are algebraic.
In this paper we find Helly-type theorems for certain non-algebraic sets, namely hollow boxes.
It is well-known (and immediately follows from the one-dimensional Helly theorem) that for any collection $\mclass{S}$ of boxes in $\R^d$,
\[ \interk{\mclass{S}}{2}\implies\inter{\mclass{S}}. \]
If we want the boxes to intersect only in their boundaries, then the value $2$ has to be greatly enlarged, as the following examples show.
\begin{example}\label{ex2}
{\em A class of collections $\mclass{S}$ of hollow boxes in $\R^d$ such that $\interk{\mclass{S}}{2d}$ holds, but not $\interk{\mclass{S}}{2d+1}$.}

Choose any box $B=\prod_{i=1}^d[x_i^0,x_i^1]$, (where $x_i^0<x_i^1$), and $p=(p_1,\dots,p_d)\in\interior B$.
For $i=1,\dots,d$ and $j=0,1$, let $F_i^j$ denote the facet of $B$ contained in the hyperplane $\{x\in\R^d: x_i=x_i^j\}$.
Let $\mclass{S}$ be any collection of hollow boxes such that
\begin{align}
& \bd B\in\mclass{S},\label{one}\\
& p\in D \text{ for all }D\in\mclass{S}\setminus \{\bd B\},\label{two}\\
& \text{for each }D\in\mclass{S}\setminus\{\bd B\} \text{ there is a facet of $B$ contained in $D$},\label{three}\\
& \text{for each facet $F$ of $B$ there exists some $D\in\mclass{S}\setminus\{\bd B\}$ such that $F\subseteq D$}.\label{four}
\end{align}
It is clear that there exist such collections $\mclass{S}$, (even infinite ones provided $d\neq 1$).
Note that the facet in \eqref{three} is unique, by \eqref{two}.
See Figure~\ref{boxes} for an example in $\R^2$.
\begin{figure}[t]
\includegraphics{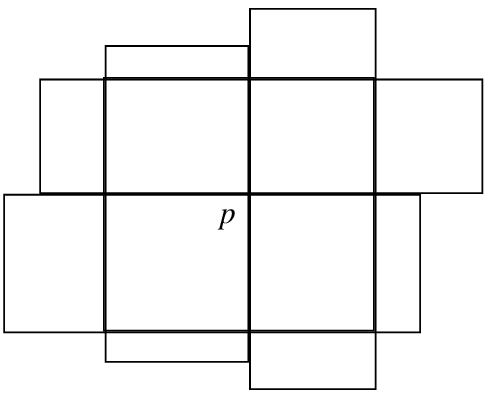}
\caption{Five rectangles with no common boundary point, yet any $4$ have a common boundary point}\label{boxes}
\end{figure}

Choose any subcollection $\mclass{T}\subseteq\mclass{S}$ of $2d$ hollow boxes.
If $\bd B\not\in\mclass{T}$, then by \eqref{two}, $\bigcap_{D\in\mclass{T}}D\neq\emptyset$.
Otherwise, by \eqref{three}, there is a facet of $B$ not contained in any $D\in\mclass{T}\setminus\{\bd B\}$, say $F_1^0$.
Then it easily follows from \eqref{two} and \eqref{three} that $(x_1^1,p_2,p_3,\dots,p_d)\in\bigcap_{D\in\mclass{T}}D$.
It follows that $\interk{\mclass{S}}{2d}$ holds.

Secondly, use \eqref{four} to choose for each facet $F_i^j$ of $B$ a $D_i^j\in\mclass{S}$ containing $F_i^j$.
Then $F_i^{1-j}\cap D_i^j=\emptyset$ by \eqref{two}.
It follows that $(\bd B)\cap\bigcap_{i=1}^d(D_i^0\cap D_i^1)=\emptyset$, and $\interk{\mclass{S}}{2d+1}$ does not hold.\qed
\end{example}

\begin{example}\label{ex1}
{\em  A class of collections $\mclass{S}$ of hollow boxes in $\R^d$ such that $\interk{\mclass{S}}{2^d-1}$ holds, but not $\interk{\mclass{S}}{2^d}$.}

Let $B=\prod_{i=1}^d[x_i^0,x_i^1], (x_i^0<x_i^1)$, and let $\mclass{S}$ be any collection of hollow boxes such that
\begin{align}
& B\subseteq \convex D \text{ for all }D\in\mclass{S},\label{onep}\\
&\text{for each vertex $v$ of $B$ there exists a $D\in\mclass{S}$ not containing $v$},\label{twop}\\
& \text{each $D\in\mclass{S}$ contains all the vertices of $B$ except at most one}.\label{threep}
\end{align}
It is clear thus there exist such collections, even infinite ones.
See Figure~\ref{boxesp} for an example in $\R^2$.
\begin{figure}
\includegraphics{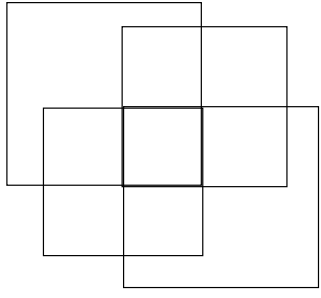}
\caption{Four rectangles with no common boundary point, yet any $3$ have a common boundary point}\label{boxesp}
\end{figure}
Given a subcollection of $2^d-1$ hollow boxes, then by \eqref{threep}, some vertex of $B$ is contained in all these boxes.
Thus $\interk{\mclass{S}}{2^d-1}$ holds.

Secondly, \eqref{twop} gives a subcollection of $2^d$ boxes $D_v$ with $v\not\in D_v$.
But then, using also \eqref{onep}, it follows from Lemma~\ref{intersection}.2 that for any vertex $w$ of $B$, $\bigcap_{v\neq w} D_v =\{w\}$.
Thus, $\bigcap_v D_v =\emptyset$, and $\interk{\mclass{S}}{2^d}$ does not hold.\qed
\end{example}

The following two theorems show that the collections in Example~\ref{ex2} in the case $d=2$, and the collections in Example~\ref{ex1} in the case $d\geq 3$ are the worst cases.

\begin{theorem}\label{rectangles}
Let $\mclass{S}$ be a collection of hollow boxes in $\R^2$.
Then \[ \interk{\mclass{S}}{5}\implies\inter{\mclass{S}}. \]
If $\mclass{S}$ is furthermore not of the form in Example~\ref{ex2}, then
\[ \interk{\mclass{S}}{4}\implies\inter{\mclass{S}}. \]
\end{theorem}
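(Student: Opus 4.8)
We outline the argument. By a routine compactness reduction it suffices to treat finite collections (for the first assertion directly; for the second, one passes to an inclusion‑minimal finite subcollection of $\mclass{S}$ with empty intersection and afterwards checks that its form propagates back to $\mclass{S}$). So assume $\mclass{S}$ is finite and nonempty, and write $\hat D$ for the solid box with $D=\bd\hat D$. Since $\convex D=\hat D$, the one‑dimensional Helly theorem applied in each coordinate (using that any two members of $\mclass{S}$ meet) gives that $B:=\bigcap_{D\in\mclass{S}}\hat D$ is a nonempty box, say $B=[a_1,b_1]\times[a_2,b_2]$ with possibly $a_i=b_i$. For $i\in\{1,2\}$ let $\mclass{S}_i^{-}$ (resp.\ $\mclass{S}_i^{+}$) be the set of $D\in\mclass{S}$ containing the face $\{x\in B:x_i=a_i\}$ (resp.\ $\{x\in B:x_i=b_i\}$) of $B$. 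The observation underlying everything, which we record as Lemma~\ref{intersection}, is that $B\cap D=B\cap\bd\hat D$ is always the union of the facets of $B$ contained in $D$; since $\bigcap_{D}D\subseteq\bigcap_D\hat D=B$ and $B$ has only finitely many facets, distributing shows that $\bigcap_{D}D=\bigcap_D(B\cap D)$ is a union of faces of $B$, and a nonempty union of faces of $B$ contains a vertex of $B$. As a vertex $v$ of $B$ lies on $D$ precisely when $D$ contains a facet of $B$ through $v$, this gives
\[
   \inter{\mclass{S}}\iff\mclass{S}=\mclass{S}_1^{\sigma_1}\cup\mclass{S}_2^{\sigma_2}\quad\text{for some }\sigma=(\sigma_1,\sigma_2)\in\{-,+\}^{2},
\]
and it remains to show such a $\sigma$ exists; suppose not.

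Two obstructions are possible. \textbf{(A)} Some $D_0$ lies in none of $\mclass{S}_1^{-},\mclass{S}_1^{+},\mclass{S}_2^{-},\mclass{S}_2^{+}$; equivalently $B\subseteq\interior\hat D_0$, i.e.\ $B\cap D_0=\emptyset$. Here $B$ is the intersection of the slabs $\{x_1\ge a_1\},\{x_1\le b_1\},\{x_2\ge a_2\},\{x_2\le b_2\}$, and each of these is a supporting halfspace of a facet of some $\hat E$ with $E\in\mclass{S}$; then $D_0$ together with these (at most four, and distinct from $D_0$) boxes $E$ is a family of at most five hollow boxes whose solid intersection equals $\hat D_0\cap B=B$, so whose intersection lies in $B\cap\bd\hat D_0=\emptyset$ --- contradicting $\interk{\mclass{S}}{5}$. \textbf{(B)} Every $D$ lies in some $\mclass{S}_i^{\pm}$, but no single $\sigma$ covers $\mclass{S}$; then for each vertex $v$ of $B$ there is a $D$ lying only in families whose defining facet misses $v$. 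A combinatorial lemma on the facet pattern of a square, supplied --- together with its $\R^d$‑analogue --- by Section~\ref{combinatorics} (roughly: an inclusion‑minimal such subfamily behaves, read against the faces of $B$, like a minimal vertex cover of $K_{2,2}$), then produces at most four hollow boxes with empty intersection, contradicting $\interk{\mclass{S}}{4}$ and a fortiori $\interk{\mclass{S}}{5}$. (For $\dim B\le1$ one argues the same way, $B$ then having at most two vertices; for $\dim B=0$ obstruction (B) cannot occur.) This proves the first assertion.

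For the second assertion we run the same dichotomy under $\interk{\mclass{S}}{4}$ alone. Obstruction (B) is excluded verbatim, and obstruction (A) is excluded whenever $\dim B\ge1$: choosing the slab‑realizers more carefully already yields four hollow boxes with empty intersection (drawn from the realizers, together with $D_0$ when $\dim B=2$, and without it when $\dim B=1$, where two of the realizers contain all of $B$). Hence $\dim B=0$, say $B=\{p\}$, with some $D_0$ satisfying $p\in\interior\hat D_0$; one then shows that $\mclass{S}$ is necessarily of the form in Example~\ref{ex2}, with $\hat D_0$ playing the role of the box in that example. Indeed, $\interk{\mclass{S}}{4}$ forces: $D_0$ is the only box with $p$ in its interior (two such boxes, with a pair of mutually opposite slab‑realizers, would have empty intersection); every other $D$ passes through $p$; and every other $D$ contains a whole facet of $\hat D_0$, with all facets of $\hat D_0$ occurring (else $D_0$, a slab‑realizer violating this, and two further realizers have empty intersection). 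These are precisely conditions \eqref{one}--\eqref{four}, contradicting the hypothesis that $\mclass{S}$ is not of that form.

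I expect the main obstacle to be this last step: reconstructing the rigid structure \eqref{one}--\eqref{four} of Example~\ref{ex2} from the bare facts that $\inter{\mclass{S}}$ fails and $\interk{\mclass{S}}{4}$ holds, and, throughout the proof, confirming that the ``bad'' subfamilies built for the $\interk{\mclass{S}}{4}$ conclusions really contain four boxes and not five. A pervasive secondary difficulty is the degenerate case $\dim B<2$, where facets of $B$ coincide or collapse and the face‑bookkeeping behind Lemma~\ref{intersection} and the combinatorial lemma must be carried through with extra care at every step.
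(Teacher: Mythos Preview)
Your outline follows the paper's strategy: form $B=\bigcap_{D\in\mclass{S}}\convex D$, reduce $\inter{\mclass{S}}$ to some vertex of $B$ lying in every $D$, and when that fails extract a small witnessing subfamily via slab realizers and the cover machinery of Section~\ref{combinatorics}. The paper does not split into your obstructions (A) and (B); it runs the minimal-cover argument of Theorem~\ref{hollow} uniformly and finds that under $\interk{\mclass{S}}{4}$ the only surviving cover is $\mclass{C}'=\{\ast\ast\}$ with $\#\mclass{T}=5$, which is exactly your case (A). Your (A)/(B) split is a valid reorganization of the same idea.

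One concrete point where your sketch goes wrong: for $\dim B=1$ in case (A) you assert that the four slab realizers \emph{alone}, without $D_0$, have empty intersection. This need not hold --- a realizer $E_1^{\pm}$ may have a horizontal side on the line carrying $B$, so that all of $B$ lies on $E_1^{\pm}$, and then the four realizers can share interior points of $B$. The paper's argument for both $\dim B=2$ and $\dim B=1$ has a two-step structure you have collapsed: first apply $\interk{\mclass{S}}{4}$ to the four realizers themselves, forcing some vertex of $B$ into their common intersection; that coincidence then makes one realizer carry two sides of $B$, so \emph{three} realizers already cut the solid intersection down to $B$, and those three together with $D_0$ give four hollow boxes with empty intersection. (A further sub-case split is needed when $\dim B=1$.) Your parenthetical ``together with $D_0$ when $\dim B=2$, and without it when $\dim B=1$'' has the roles reversed relative to what actually works.

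On the $\dim B=0$ reconstruction of Example~\ref{ex2}, which you rightly flag as the crux: the paper's argument is more concrete than your summary. It locates four specific points $a,b,c,d$, one on each side of $\convex D_0$, as intersections of triples drawn from $\mclass{T}=\{D_0,D_1^0,D_1^1,D_2^0,D_2^1\}$; then for each $E\in\mclass{S}\setminus\mclass{T}$ it uses $\interk{\mclass{S}}{4}$ against six such triples to force first $p\in E$ and then three of $a,b,c,d$ into $E$, which pins down a full side of $\convex D_0$ inside $E$. Your one-line claims (uniqueness of $D_0$, every other $D$ contains a facet of $\convex D_0$) are the correct targets, but each requires this pointwise bookkeeping rather than the slab-realizer count you suggest.
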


\begin{theorem}\label{hollow}
Let $d\geq 3$, and $\mclass{S}$ a collection of hollow boxes in $\R^d$.
Then \[ \interk{\mclass{S}}{2^d}\implies\inter{\mclass{S}}. \]
If $\mclass{S}$ is furthermore not of the form in Example~\ref{ex1}, then
\[ \interk{\mclass{S}}{2^d-1}\implies\inter{\mclass{S}}. \]
\end{theorem}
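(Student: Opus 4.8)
The plan is to handle both assertions by the same scheme, which converts the statement into a combinatorial question about the vertices of a single box. Since hollow boxes are compact, $\inter{\mclass S}$ holds as soon as it holds for every finite subcollection, so we may take $\mclass S$ finite. As $2^{d}\geq 2$, the hypothesis implies $\interk{\mclass S}{2}$, hence the solid boxes $B$ with $\bd B\in\mclass S$ meet pairwise and, by the one-dimensional Helly theorem applied in each coordinate, $C:=\bigcap\{B:\bd B\in\mclass S\}$ is a nonempty box; write $C=\prod_{i=1}^{d}[\alpha_i,\beta_i]$ and let $V$ be its set of (at most $2^{d}$) vertices.

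The key reduction is that $\bigcap_{\bd B\in\mclass S}\bd B\neq\emptyset$ if and only if some vertex $v\in V$ lies on every $\bd B$, $\bd B\in\mclass S$. Indeed, a common boundary point $p$ lies in $C$; since a point of $C$ lies on $\bd B$ exactly when one of its coordinates already coincides with an endpoint of $B$, replacing each coordinate $p_i$ with $\alpha_i<p_i<\beta_i$ by $\alpha_i$ keeps $p$ on every $\bd B$ and turns it into a vertex of $C$. For fixed $B$, the set $S_B:=V\cap\interior B$ of vertices of $C$ \emph{not} on $\bd B$ is cut out by the facets of $C$ that $B$ fails to contain, so it is a subcube of $V$ (possibly empty, possibly all of $V$). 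Thus both theorems reduce to the single claim that \emph{under the stated hypothesis the subcubes $S_B$, $\bd B\in\mclass S$, do not cover $V$}, after which any uncovered vertex is the required common boundary point.

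To prove this covering claim I would argue by contradiction and pass to a minimal counterexample; a standard restriction argument pins its size to $2^{d}+1$ for the first statement and to $2^{d}$ for the second (for the second one also checks that a minimal counterexample not of the form in Example~\ref{ex1} has exactly $2^{d}$ members), and yields for each $D\in\mclass S$ a point $q_D\notin D$ common to $\mclass S\setminus\{D\}$. If every subfamily $\mclass S\setminus\{D\}$ had the same solid intersection $C$, then each $q_D$ would lie in $C$ and the first statement would follow by pigeonhole: rounding each $q_D$ to a vertex of $C$ as above gives $2^{d}+1$ vertices $v_D$ with $v_D\in S_D$ but $v_D\notin S_{D'}$ for $D'\neq D$, impossible since $\#V\leq 2^{d}$; for the second statement the same argument, now with $2^{d}$ vertices in a set of size $\leq 2^{d}$, forces each $S_D$ to be a single vertex of $C$, i.e.\ forces $\mclass S$ into the configuration of Example~\ref{ex1} with $B=C$. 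The genuine difficulty is when a subfamily $\mclass T$ has solid intersection $C_{\mclass T}\supsetneq C$ — equivalently, when some box is the unique member tight at a facet of $C$ — and here the combinatorial lemma of Section~\ref{combinatorics} together with Lemma~\ref{intersection} enters: since $C\subseteq\convex D$ for every $D\in\mclass S$, Lemma~\ref{intersection} translates the prescribed non-empty intersections of subfamilies into relations among the sets $S_D$, from which one extracts $2^{d}$ (resp.\ $2^{d}-1$) hollow boxes with empty common boundary, the only escape being again the configuration of Example~\ref{ex1}.

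I expect this last step to be the main obstacle. When one restricts attention to a subfamily $\mclass T$ the relevant box is $C_{\mclass T}$, whose vertices and facets differ from those of $C$, so the rounding reduction of the second paragraph must be re-run relative to $C_{\mclass T}$; consequently the combinatorial lemma has to be stated abstractly enough to be applied at every such scale, and one must track carefully how the subcubes $S_D$ behave when passing between $C$ and the various $C_{\mclass T}$. Related care is needed for the degenerate cases where $C$ is lower-dimensional or a single point (so that $V$ is small), and, above all, where a single box $B_0$ has $C\subseteq\interior B_0$: then $S_{B_0}=V$ covers everything and $\bigcap_{\bd B\in\mclass S}\bd B=\emptyset$ outright, yet removing $B_0$ leaves $C$ unchanged, so excluding this configuration really does require the full hypothesis $\interk{\mclass S}{2^{d}}$ and not merely pairwise intersection.
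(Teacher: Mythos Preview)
Your reduction to the vertex set $V$ of $C=\bigcap_{D\in\mclass S}\convex D$ and the encoding of each $S_D=V\cap\interior(\convex D)$ as a subcube is exactly the paper's setup (the subcubes are the patterns in $\{0,1,\ast\}^d$), and your pigeonhole in the ``nice case'' is sound. But the paper neither splits into cases nor re-runs the argument at multiple scales $C_{\mclass T}$ as you propose for the hard case. Instead, having chosen for each vertex $x_\varepsilon$ of $C$ a box $D_\varepsilon\in\mclass S$ missing it, it passes to a minimal cover $\mclass C'\subseteq\{0,1,\ast\}^d$ by the associated patterns and then \emph{augments} the family $\{D_\rho:\rho\in\mclass C'\}$: for each coordinate $j$ in which every pattern of $\mclass C'$ has a $\ast$, it adjoins two boxes $D_j^0,D_j^1\in\mclass S$ tight at the two $j$-facets of $C$ (these exist since $\mclass S$ is finite). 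The augmented family $\mclass T$ now satisfies $\bigcap_{D\in\mclass T}\convex D=C$, so $\bigcap_{D\in\mclass T}D\subseteq C\cap\bigcap_{\rho\in\mclass C'}D_\rho=\emptyset$ by Lemma~\ref{intersection}.\ref{lemmaone}; hence $\#\mclass T\geq 2^d$, while $\#\mclass T\leq\#\mclass C'+2s$ with $s$ the number of all-$\ast$ coordinates, and Lemma~\ref{minimal2} then forces $\mclass C'=\{0,1\}^d$ when $d\geq 3$. This augmentation trick --- adding tight boxes so that the solid intersection of the subfamily drops back to $C$ --- is the missing idea, and it sidesteps the scale-changing you anticipate.

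There is also a framing issue with the second assertion. Its contrapositive requires that the \emph{entire} $\mclass S$ be as in Example~\ref{ex1}, not merely a minimal subfamily with empty intersection; the property ``not of the form in Example~\ref{ex1}'' does not pass to subfamilies, so your minimal-counterexample restriction cannot carry it. The paper handles this directly: once $\mclass C'=\{0,1\}^d$ has pinned down the $2^d$ boxes $D_\varepsilon$ (each missing exactly the single vertex $x_\varepsilon$, so $C$ is full-dimensional), it takes any further $D\in\mclass S$ and observes that if $D$ missed two vertices $x_\gamma,x_\delta$ of $C$ then, by Lemma~\ref{intersection}.\ref{lemmathree}, $D$ together with the $2^d-2$ boxes $D_\varepsilon$ for $\varepsilon\neq\gamma,\delta$ would have empty intersection, contradicting $\interk{\mclass S}{2^d-1}$.
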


Note that in $\R^1$, a hollow box is a two-point set.
It is trivially seen that for a collection $\mclass{S}$ of two-point sets,
\[ \interk{\mclass{S}}{2}\implies\inter{\mclass{S}}, \]
except if $\mclass{S} = \{\{a,b\}, \{b,c\}, \{c,a\}\}$ for some distinct elements $a,b,c$, i.e.\ if $\mclass{S}$ is as in Example~\ref{ex2}.

\section{Combinatorial Preparation}\label{combinatorics}
A {\em string of length} $d$ {\em over the alphabet} $A$ is any $d$-tuple from $A^d$, and is written as $\varepsilon=\varepsilon_1\dots\varepsilon_d$.
We say that $\varepsilon_i$ is in {\em position} $i$.
A {\em pattern} is a string over $\{0,1,\ast\}$.
A string $\varepsilon_1\dots\varepsilon_d$ over $\{0,1\}$ {\em matches} a pattern $\rho_1\dots\rho_d$ if for all $i=1,\dots,d$, $\rho_i=0\Rightarrow\varepsilon_i=0$ and $\rho_i=1\Rightarrow\varepsilon_i=1$.
Thus, a $\ast$ in a pattern is a ``wildcard'' matching $0$ or $1$.
A {\em cover} of $\{0,1\}^d$ is a set of patterns $\mclass{C}\subseteq\{0,1,\ast\}^d$ such that any string in $\{0,1\}^d$ matches some pattern in $\mclass{C}$.
A {\em minimal cover} of $\{0,1\}^d$ is a cover $\mclass{C}$ of $\{0,1\}^d$ such that no proper subset of $\mclass{C}$ is a cover of $\{0,1\}^d$.

\begin{lemma}\label{minimal1}
Let $\mclass{C}$ be a minimal cover of $\{0,1\}^d$.
Then, for each $i=1,\dots,d$, $E_i:=\{\varepsilon_i:\varepsilon_1\dots\varepsilon_d\in C\}$ is equal to either $\{\ast\}$, $\{0,1\}$ or $\{0,1,\ast\}$.
Let $s:=\#\left\{i:E_i=\{\ast\}\right\}$.
Then $\#\mclass{C}\leq 2^{d-s}$, with equality iff $\mclass{C} =\{\varepsilon: \varepsilon_i =\ast \text{ for all }i\in J\}$ for some $J\subseteq\{1,2,\dots,d\}$ with $\#J=s$.
\end{lemma}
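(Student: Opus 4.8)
The plan is to build everything on one elementary fact about minimal covers: if $\mclass{C}$ is a minimal cover of $\{0,1\}^d$, then every member is \emph{essential}, i.e.\ for each $\tau\in\mclass{C}$ there is a string $\varepsilon^{(\tau)}\in\{0,1\}^d$ that matches $\tau$ but no other member of $\mclass{C}$; for otherwise $\mclass{C}\setminus\{\tau\}$ would already be a cover, contradicting minimality. I would record this observation first, since it drives both parts of the lemma.

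For the classification of the $E_i$'s, I would rule out the forbidden cases. First $E_i\neq\{0\},\{1\}$: if, say, no pattern of $\mclass{C}$ carries a $1$ in position $i$, then no string with $\varepsilon_i=1$ is matched, contradicting that $\mclass{C}$ is a cover. Next $E_i\neq\{0,\ast\},\{1,\ast\}$: if no pattern carries a $1$ in position $i$ but some carries an $\ast$ there, then the subcollection $\{\rho\in\mclass{C}:\rho_i=\ast\}$ is already a cover — it matches every string with $\varepsilon_i=1$ (as in the previous sentence), hence also every string with $\varepsilon_i=0$, since flipping coordinate $i$ does not affect matching by a pattern that has $\ast$ there — and it is a \emph{proper} subcollection because $0\in E_i$; this contradicts minimality. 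Since $E_i\subseteq\{0,1,\ast\}$ is nonempty, the only survivors are $\{\ast\}$, $\{0,1\}$ and $\{0,1,\ast\}$.

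For the cardinality bound, set $J:=\{i:E_i=\{\ast\}\}$, so \emph{every} pattern of $\mclass{C}$ has an $\ast$ in every coordinate of $J$. I would then blow up the essential-member fact: for $\tau\in\mclass{C}$ with essential string $\varepsilon^{(\tau)}$, every string agreeing with $\varepsilon^{(\tau)}$ off $J$ is again essential for $\tau$, because flipping a coordinate $i\in J$ is invisible to matching by \emph{all} members of $\mclass{C}$ simultaneously ($\tau$ as well as every other $\rho$ has $\ast$ in position $i$). Thus $\tau$ owns a whole block $B_\tau$ of $2^{\#J}$ strings, and blocks of distinct members are disjoint — two intersecting blocks would coincide, forcing one member's essential string to be matched by another member. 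Counting, $2^{\#J}\cdot\#\mclass{C}=\sum_{\tau}\#B_\tau\le 2^d$, i.e.\ $\#\mclass{C}\le 2^{d-s}$.

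Finally, for the equality case I would analyse when the blocks exhaust $\{0,1\}^d$. If $\#\mclass{C}=2^{d-s}$, the disjoint blocks partition $\{0,1\}^d$, so every string is matched by exactly one member; that is, $\mclass{C}$ is an exact cover of $\{0,1\}^d$ by subcubes. Writing $k_\tau$ for the number of $\ast$'s in $\tau$, the set of strings matching $\tau$ has size $2^{k_\tau}$ and contains $B_\tau$, so $k_\tau\ge s$; meanwhile $\sum_\tau 2^{k_\tau}=2^d=2^{s}\cdot\#\mclass{C}=\sum_\tau 2^s$ forces $k_\tau=s$ for every $\tau$. Since $\tau$ already has $s$ stars inside $J$, its stars are \emph{precisely} the coordinates in $J$; hence $\mclass{C}$ is contained in the family of the $2^{d-s}$ patterns whose star-set is exactly $J$, and comparison of cardinalities shows $\mclass{C}$ \emph{is} that whole family (which, conversely, is readily checked to be a minimal cover meeting the bound). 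I expect the block argument of the third paragraph to be the crux: the point that a $J$-coordinate may be blown up precisely because it is an $\ast$ in every pattern at once, whereas a coordinate with $E_i=\{0,1,\ast\}$ cannot be, is exactly what makes $2^{d-s}$ rather than $2^{d}$ the right bound.
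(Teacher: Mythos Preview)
Your proof is correct and rests on the same engine as the paper's: the essential-string observation (each $\tau\in\mclass{C}$ has a witness matched by $\tau$ alone), and the same classification argument for the $E_i$'s. The one organizational difference is how you extract the bound $\#\mclass{C}\le 2^{d-s}$ and its equality case. The paper simply \emph{deletes} the coordinates in $J$, notes that the resulting $\mclass{C}'\subseteq\{0,1,\ast\}^{d-s}$ is again a minimal cover with $\#\mclass{C}'=\#\mclass{C}$, and then invokes the base case $\#\mclass{C}'\le 2^{d-s}$ with equality iff $\mclass{C}'=\{0,1\}^{d-s}$ (so no stars outside $J$). Your block argument is the dual of this projection---instead of quotienting out the $J$-coordinates you fibre over them---and your exact-cover counting for the equality case ($\sum_\tau 2^{k_\tau}=2^d$ forcing $k_\tau=s$) reproduces the same conclusion with a little more bookkeeping. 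Both routes are short; the paper's projection is marginally slicker because the equality characterization drops out immediately from the $s=0$ case rather than requiring a separate partition-counting step.
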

\begin{proof}
We first show that any minimal cover $\mclass{C}$ satisfies $\#\mclass{C}\leq 2^d$, with equality iff $\mclass{C}=\{0,1\}^d$.
For each pattern $\rho\in\mclass{C}$, the set $\mclass{C}\setminus\{\rho\}$ is not a cover of $\{0,1\}^d$, and there exists a string $\varepsilon_\rho\in\{0,1\}^d$ that matches $\rho$ but does not match any other pattern in $\mclass{C}$.
Thus,
$$\phi:\mclass{C}\to\{0,1\}^d; \rho\mapsto\varepsilon_\rho$$
is an injection, and $\#\mclass{C}\leq 2^d$.
If equality holds, $\phi$ is a bijection, and any string in $\{0,1\}^d$ matches a unique pattern in $\mclass{C}$.
Thus $\mclass{C}$ defines a partition of $\{0,1\}^d$: a block of the partition consists of all strings matching a given pattern in $\mclass{C}$.
Since there are $2^d$ blocks, each block must contain exactly $1$ element.
Thus no pattern in $\mclass{C}$ contains a $\ast$, and $\mclass{C}=\{0,1\}^d$.

Secondly, we show that if $0$ does not occur in the first position of any string in $\mclass{C}$, there are only $\ast$s in the first position.
Let
\[ \mclass{C}^\ast=\{\varepsilon_2\dots\varepsilon_n: \ast\varepsilon_2\dots\varepsilon_n\in\mclass{C}\}. \]
It is easily seen that $\mclass{C}^\ast$ is a cover for $\{0,1\}^{d-1}$:
For any $\varepsilon\in\{0,1\}^{d-1}$, $0\varepsilon$ matches some pattern in $\mclass{C}$ starting with $\ast$.
But then, by putting back $\ast$ in the first position of every pattern in $\mclass{C}^\ast$, we already obtain a cover of $\{0,1\}^d$.
Thus, $1$ does not occur in the first position in any string in $\mclass{C}$.
Similarly, if $1$ does not occur in the first position, then there are again only $\ast$s in the first position.

Finally, to complete the proof, delete the positions for which $E_i=\{\ast\}$, to obtain $\mclass{C}'\subseteq\{0,1,\ast\}^{d-s}$.
Then $\mclass{C}'$ is clearly a minimal cover of $\{0,1\}^{d-s}$, and $\#\mclass{C}=\#\mclass{C}'$.
Now apply the first part of the proof.
\end{proof}

We omit the proof of the following elementary inequality.

\begin{lemma}\label{num}
Let $d\geq s\geq 0$ be integers.
Then $2^{d-s} < 2^d - 2s$, except in the following cases:
\begin{enumerate}
\item If $(d,s)=(1,1)$ or $(d,s)=(2,2)$, the opposite inequality holds;
\item If $s=0$, or $(d,s)=(2,1)$, there is equality.\qed
\end{enumerate}
\end{lemma}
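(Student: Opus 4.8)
The plan is straightforward: this is an elementary inequality, so essentially all of the work is in organising the finitely many exceptional small cases and handling the generic case by a monotonicity argument. First I would dispose of $s=0$, where $2^{d-s}=2^d=2^d-2s$ immediately gives the equality in case~(2). So from now on assume $s\geq 1$, and introduce $g(d):=(2^d-2s)-2^{d-s}$ for integers $d\geq s$; the assertion to be proved is that $g(d)>0$ for all such $(d,s)$ except on the listed exception set.

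The key observation is that, for fixed $s\geq 1$, the function $g$ is strictly increasing on $\{s,s+1,s+2,\dots\}$, since $g(d+1)-g(d)=2^d-2^{d-s}=2^{d-s}(2^s-1)>0$. Consequently, once $g$ is nonnegative at some $d_0\geq s$ it is strictly positive for every $d>d_0$, so it suffices to compute the sign of $g$ at the first one or two values of $d$. Here $g(s)=2^s-2s-1$, and a one-line induction shows $2^s>2s+1$ for all $s\geq 3$ (base $s=3$: $8>7$; step: $2^{s+1}>2(2s+1)>2(s+1)+1$). Hence $g(s)>0$ whenever $s\geq 3$, and by monotonicity $g(d)>0$ for every $d\geq s$ in that range, so there are no exceptions when $s\geq 3$.

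That leaves $s=1$ and $s=2$, which I would simply check by hand. For $s=1$: $g(1)=-1<0$, giving the opposite inequality at $(d,s)=(1,1)$; $g(2)=0$, the equality at $(2,1)$; $g(3)=2>0$, and monotonicity extends this to all $d\geq 3$. For $s=2$: $g(2)=-1<0$, the opposite inequality at $(2,2)$; $g(3)=2>0$, and monotonicity again gives $g(d)>0$ for all $d\geq 3$. These cases together exhaust $d\geq s\geq 0$ and match the stated exception list exactly.

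I do not anticipate any genuine obstacle here; the only substantive ingredient is the trivial bound $2^s>2s+1$ for $s\geq 3$, and everything else is bookkeeping. If one prefers to avoid even introducing $g$, the same split can be run directly according to whether $d-s$ equals $0$, equals $1$, or is at least $2$ (in the last case $2^d-2^{d-s}\geq\frac34\cdot 2^d\geq 3\cdot 2^s\geq 3(s+1)>2s$), but the monotonicity formulation keeps the case analysis shortest.
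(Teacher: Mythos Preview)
Your main argument is correct and complete: the reduction to $s\geq 1$, the monotonicity of $g(d)=(2^d-2s)-2^{d-s}$ in $d$, the induction showing $2^s>2s+1$ for $s\geq 3$, and the explicit checks at $s=1,2$ all go through cleanly and account for every pair $(d,s)$ with $d\geq s\geq 0$. The paper itself simply states that it \emph{omits the proof of this elementary inequality}, so there is no approach to compare against; you have supplied exactly the kind of short verification the authors deemed routine.

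One small caveat, only about your closing parenthetical alternative: the chain $2^d-2^{d-s}\geq\tfrac34\cdot 2^d$ is equivalent to $2^{d-s}\leq 2^{d-2}$, i.e.\ to $s\geq 2$, so it fails when $s=1$ (e.g.\ $d=3$, $s=1$ gives $8-4=4<6$). The case $s=1$, $d-s\geq 2$ is of course still true ($2^{d-1}<2^d-2$ for $d\geq 3$), but it would need a separate line if you actually wanted to run that version of the argument. Since this is only an aside and your primary proof via $g$ is sound, nothing in the proposal needs to change.
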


\begin{lemma}\label{minimal2}
With the hypothesis of Lemma~\ref{minimal1}, $\#\mclass{C} < 2^d-2s$, except in the following cases:
\begin{enumerate}
\item If $\mclass{C}=\{\ast\}$ or $\mclass{C}=\{\ast\ast\}$ then $\#\mclass{C}>2^d-2s = 0$;
\item If $\mclass{C}=\{0,1\}^d$ or $\mclass{C}=\{0\ast,1\ast\}$ or $\mclass{C}=\{\ast0,\ast1\}$ then $\#\mclass{C} = 2^d-2s$.
\end{enumerate}
\end{lemma}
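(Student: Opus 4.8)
The plan is to combine Lemmas~\ref{minimal1} and~\ref{num} essentially mechanically. From Lemma~\ref{minimal1} we already have $\#\mclass{C}\leq 2^{d-s}$, together with a description of when equality holds; so the whole question reduces to comparing $2^{d-s}$ with $2^d-2s$, which is precisely the content of Lemma~\ref{num}. In the ``generic'' range — all pairs $(d,s)$ with $s\geq 1$ and $(d,s)\notin\{(1,1),(2,1),(2,2)\}$ — Lemma~\ref{num} gives the strict inequality $2^{d-s}<2^d-2s$, so $\#\mclass{C}\leq 2^{d-s}<2^d-2s$ and there is nothing to prove. Thus only the finitely many exceptional pairs, plus the case $s=0$, remain to be examined, and these will furnish items~(1) and~(2).

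For the pairs where Lemma~\ref{num} yields \emph{equality} $2^{d-s}=2^d-2s$, namely $s=0$ and $(d,s)=(2,1)$, I would invoke the equality clause of Lemma~\ref{minimal1}: $\#\mclass{C}=2^{d-s}$ forces $\mclass{C}=\{\varepsilon:\varepsilon_i=\ast\text{ for all }i\in J\}$ for some $J$ with $\#J=s$. For $s=0$ this is $\mclass{C}=\{0,1\}^d$; for $(d,s)=(2,1)$ the two admissible choices $J=\{1\}$ and $J=\{2\}$ give $\mclass{C}=\{\ast0,\ast1\}$ and $\mclass{C}=\{0\ast,1\ast\}$. In every other instance of these two parameter pairs we have the strict inequality $\#\mclass{C}<2^{d-s}=2^d-2s$. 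This is exactly item~(2).

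For the two remaining pairs $(d,s)=(1,1)$ and $(d,s)=(2,2)$, where Lemma~\ref{num} reverses the inequality, I would argue directly rather than through the bound $2^{d-s}$: here $s=d$, so for every pattern in $\mclass{C}$ each of the $d$ positions is a $\ast$, whence $\mclass{C}=\{\ast\cdots\ast\}$ and $\#\mclass{C}=1$, while $2^d-2s=2^d-2d=0$ for $d\in\{1,2\}$. Thus $\#\mclass{C}=1>0=2^d-2s$, which is item~(1).

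I do not expect a genuine obstacle here: there is no analytic or combinatorial content beyond the two lemmas already proved. The only thing to be careful about is the bookkeeping — matching each exceptional pair produced by Lemma~\ref{num} with the correct case of the equality-characterisation in Lemma~\ref{minimal1}, and not silently dropping the degenerate situations (such as $d=0$, or $s=d$) — so the write-up will be organised as a short, exhaustive case split on $(d,s)$.
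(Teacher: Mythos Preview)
Your proposal is correct and follows essentially the same route as the paper: both arguments combine Lemma~\ref{minimal1} with Lemma~\ref{num}. The only organisational difference is in how the small cases are dispatched. The paper simply enumerates all minimal covers for $d=1$ and $d=2$ (up to symmetry there are two for $d=1$ and five for $d=2$) and checks the inequality by hand, then invokes Lemmas~\ref{minimal1} and~\ref{num} only for $d\geq 3$. You instead let Lemma~\ref{num} drive the case split on $(d,s)$ throughout, and use the equality clause of Lemma~\ref{minimal1} to pin down $\mclass{C}$ in the boundary cases $s=0$ and $(d,s)=(2,1)$, which saves you the explicit enumeration. Either way works; your version is marginally more systematic, the paper's marginally more concrete.
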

\begin{proof}
It is easy to check everything for $d=1$ and $d=2$:
The only minimal covers for $d=1$ are $\{\ast\}$ and $\{0,1\}$, and for $d=2$, are equivalent (up to permutation of the positions, and interchange of $0$ and $1$) to one of
\[
\{\ast\ast\},\{0\ast,1\ast\},\{0\ast,10,11\},\{0\ast,\ast0,11\},\{00,01,10,11\}.\]

For $d\geq 3$, if $s\geq 1$, then $\#\mclass{C}\leq 2^{d-s} < 2^d-2s$, by Lemmas~\ref{minimal1} and \ref{num}.
Otherwise, $s=0$, and by Lemma~\ref{minimal1}, $\#\mclass{C}<2^d$ unless $\mclass{C}=\{0,1\}^d$.
\end{proof}

\section{Proofs of Theorems~\ref{rectangles} and \ref{hollow}}\label{proofs}
We first prove a rather technical lemma, which gives some insight into the (not easily visualizable) intersections of hollow boxes.
\begin{lemma}\label{intersection}
Let $B=\prod_{i=1}^d[x_i^0,x_i^1]$, with  $x_i^0\leq x_i^1$ for each $i=1,\dots,d$.
\textup{(}Thus $B$ is not necessarily full-dimensional.\textup{)}
For each string $\varepsilon\in\{0,1\}^d$, let $x_\varepsilon:=(x_1^{\varepsilon_1},x_2^{\varepsilon_2},\dots,x_d^{\varepsilon_d})$, and let $D_\varepsilon$ be a hollow box such that $x_\varepsilon\not\in D_\varepsilon$ and $B\subseteq\convex D_\varepsilon$.
\textup{(}Thus $\{x_\varepsilon:\varepsilon\in\{0,1\}^d\}$ is the vertex set of $B$, with repetitions if $\dim B<d$.\textup{)}
Then,
\begin{enumerate}
\item $B\cap\bigcap_\varepsilon D_\varepsilon =\emptyset$,\label{lemmaone}
\item for any $\gamma\in\{0,1\}^d$, 
$B\cap\bigcap_{\varepsilon\neq\gamma}D_\varepsilon\subseteq\{x_\gamma\}$,\label{lemmatwo}
\item for any $\gamma,\delta\in\{0,1\}^d$, 
\[
\qquad B\cap\bigcap_{\varepsilon\neq\gamma,\delta}D_\varepsilon\subseteq
\begin{cases}
  \convex\{x_\gamma,x_\delta\} & \text{if $x_\gamma$ and $x_\delta$ differ in exactly one coordinate,} \\ \{x_\gamma,x_\delta\} & \text{otherwise.}
\end{cases}
\]\label{lemmathree}
\end{enumerate}
\end{lemma}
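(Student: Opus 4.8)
The plan is to reduce the statement to elementary inequalities among the numbers defining the boxes, and then to carry out a counting argument over $\{0,1\}^d$. Write each $D_\varepsilon=\bd C_\varepsilon$ with $C_\varepsilon=\prod_{i=1}^d[a_i^\varepsilon,b_i^\varepsilon]$. Since the convex hull of the boundary of a (full-dimensional) box is the box itself, the hypothesis $B\subseteq\convex D_\varepsilon$ just says $B\subseteq C_\varepsilon$, i.e.\ $a_i^\varepsilon\leq x_i^0\leq x_i^1\leq b_i^\varepsilon$ for every $i$; and since $x_\varepsilon\in B\subseteq C_\varepsilon$ but $x_\varepsilon\notin\bd C_\varepsilon$, we get $x_\varepsilon\in\interior C_\varepsilon$, i.e.\ $a_i^\varepsilon<x_i^{\varepsilon_i}<b_i^\varepsilon$ for every $i$. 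First I would check that these two families of inequalities are the only facts about the $D_\varepsilon$ that the rest of the argument needs.

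The next step is to pin down, for a fixed $p\in B$, which boxes $D_\varepsilon$ it can possibly fail to lie on. Because $p\in C_\varepsilon$, we have $p\notin D_\varepsilon$ exactly when $a_i^\varepsilon<p_i<b_i^\varepsilon$ for all $i$, and comparing with the inequalities above, coordinate by coordinate, shows: if $\varepsilon_i=0$ this is guaranteed as soon as $p_i\neq x_i^1$; if $\varepsilon_i=1$ it is guaranteed as soon as $p_i\neq x_i^0$; and if $x_i^0=x_i^1$ it is automatic for either value of $\varepsilon_i$. I would package this by letting $G(p)\subseteq\{0,1\}^d$ be the set of strings $\varepsilon$ such that $\varepsilon_i=0$ whenever $p_i=x_i^0<x_i^1$ and $\varepsilon_i=1$ whenever $p_i=x_i^1>x_i^0$ (these two requirements never conflict), so that $\varepsilon\in G(p)$ implies $p\notin D_\varepsilon$. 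The point of $G(p)$ is that it is a nonempty ``combinatorial subcube'' of $\{0,1\}^d$: in coordinate $i$ the set of permitted values is $\{0\}$, $\{1\}$ or $\{0,1\}$, the last occurring precisely when $x_i^0=x_i^1$ or $x_i^0<p_i<x_i^1$; hence $\#G(p)=2^k$, where $k$ is the number of such coordinates.

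With this device the three parts follow by counting. If $p\in B\cap\bigcap_{\varepsilon\in T}D_\varepsilon$ for some $T\subseteq\{0,1\}^d$, then $G(p)\cap T=\emptyset$, i.e.\ $G(p)\subseteq\{0,1\}^d\setminus T$. For (1), $T=\{0,1\}^d$ would force $G(p)=\emptyset$, contradicting nonemptiness. For (2), $T=\{0,1\}^d\setminus\{\gamma\}$ forces $\#G(p)\leq 1$, hence $k=0$; then every coordinate of $B$ is nondegenerate and every coordinate of $p$ is one of the two endpoints, so $p=x_\beta$ for a unique $\beta$, $G(p)=\{\beta\}$, and therefore $\beta=\gamma$ and $p=x_\gamma$. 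For (3), $T=\{0,1\}^d\setminus\{\gamma,\delta\}$ forces $\#G(p)\leq 2$, i.e.\ $k\leq 1$. If $k=0$ the argument of (2) gives $p\in\{x_\gamma,x_\delta\}$. If $k=1$ there is exactly one free coordinate $i_0$, so $G(p)$ is the pair of strings that agree off $i_0$ and carry opposite bits in position $i_0$, and since $\#G(p)=2$ we must have $\{\gamma,\delta\}=G(p)$; thus $\gamma$ and $\delta$ agree off $i_0$ and $\{\gamma_{i_0},\delta_{i_0}\}=\{0,1\}$. A short split on whether $i_0$ is degenerate finishes it: if $x_{i_0}^0=x_{i_0}^1$ then $x_\gamma=x_\delta$ and in fact $p=x_\gamma=x_\delta$, which is the ``otherwise'' case; if $x_{i_0}^0<x_{i_0}^1$ then $x_\gamma$ and $x_\delta$ differ in exactly the coordinate $i_0$, $p$ agrees with both of them in every coordinate $\neq i_0$, and $x_{i_0}^0<p_{i_0}<x_{i_0}^1$, so $p\in\convex\{x_\gamma,x_\delta\}$.

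I expect the only real subtlety to be the bookkeeping for degenerate coordinates, where $x_i^0=x_i^1$ and several strings $\varepsilon$ name the same geometric vertex of $B$; the way to keep this under control is exactly the one above — keep $G(p)$ and the entire count phrased in terms of strings in $\{0,1\}^d$, and translate back to points of $\R^d$ only at the very end, so that the cardinality of $G(p)$ matches the statements about $\gamma$ and $\delta$. The remaining work is only the routine verification of the one-dimensional inequalities in the second paragraph.
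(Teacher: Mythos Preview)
Your argument is correct and rests on the same coordinate-by-coordinate inequalities as the paper: from $B\subseteq\convex D_\varepsilon$ and $x_\varepsilon\in\interior(\convex D_\varepsilon)$ one gets $a_i^\varepsilon\le x_i^0$, $x_i^1\le b_i^\varepsilon$, and $a_i^\varepsilon<x_i^{\varepsilon_i}<b_i^\varepsilon$, and everything follows from these. The difference is in the packaging. The paper, for each part separately, takes a point $x$ in the intersection and \emph{constructs} a single string $\varepsilon$ (with a tie-breaking rule involving $\gamma$, and in part~\ref{lemmathree} a choice of a distinguished coordinate $j$) such that $x\notin D_\varepsilon$, and then argues that this $\varepsilon$ must coincide with $\gamma$ (or with $\gamma$ or $\delta$). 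You instead define the whole subcube $G(p)\subseteq\{0,1\}^d$ of strings that are \emph{forced} to miss $p$, observe $\#G(p)=2^k$, and read off all three parts by counting. Your organization is a bit cleaner---one device handles all three statements uniformly and the degenerate-coordinate bookkeeping is localized in the definition of $G(p)$---while the paper's version is slightly more hands-on but avoids introducing the auxiliary set. Substantively the two proofs are the same.
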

\begin{proof}
Clearly, part~\ref{lemmaone} follows from part~\ref{lemmatwo}:
If $B$ is a single point, each $D_\varepsilon$ is disjoint from $B$.
Otherwise, choose $\gamma, \gamma'$ such that $x_\gamma\neq x_{\gamma'}$.
Then, by part~\ref{lemmatwo}, $B\cap\bigcap_\varepsilon D_\varepsilon=\emptyset$.

Although part~\ref{lemmatwo} also easily follows from part~\ref{lemmathree}, we first prove part~\ref{lemmatwo}, as it clears the way for a proof of part~\ref{lemmathree}.
For each $\varepsilon$, write $D_\varepsilon=\bd\prod_{i=1}^d[a_i^\varepsilon,b_i^\varepsilon]$.
Let $x=(x_1,x_2,\dots,x_d)\in B\cap\bigcap_{\varepsilon\neq\gamma}D_\varepsilon$.
Then $x_i^0\leq x_i\leq x_i^1$ for each $i$.
Define $\varepsilon$ by 
\[
  \varepsilon_i := 
\begin{cases}
  \gamma_i & \text{if } x_i=x_i^{\gamma_i}, \\
  1-\gamma_i & \text{otherwise.}
\end{cases}
\]
Since $x_\varepsilon\subseteq B\subseteq\convex D_\varepsilon$, but $x_\varepsilon\not\in D_\varepsilon$, we have
$a_i^\varepsilon\leq x_i^0\leq x_i^1\leq b_i^\varepsilon$ and $a_i^\varepsilon < x_i^{\varepsilon_i} < b_i^\varepsilon$ for all $i$.
If $\varepsilon_i=\gamma_i$, then $x_i^{\varepsilon_i}=x_i^{\gamma_i}=x_i$.
If $\varepsilon_i=1-\gamma_i$, then $x_i\neq x_i^{\gamma_i}$, and either $\gamma_i=1$ and $x_i^{\varepsilon_i}=x_i^0\leq x_i<x_i^1$,
or $\gamma_i=0$ and $x_i^{\varepsilon_i}=x_i^1\geq x_i > x_i^0$.
In all cases, $a_i^\varepsilon<x_i<b_i^\varepsilon$, and it follows that $x\not\in D_\varepsilon$.
Thus $\varepsilon=\gamma$, and $x_i=x_i^{\gamma_i}$ for all $i$.
It follows that $x=x_\gamma$.

Now let $x\in B\cap\bigcap_{\varepsilon\neq\gamma,\delta}D_\varepsilon$, and suppose $x\neq x_\gamma,x_\delta$.
Let $j$ be any position such that $x_j\neq x_j^{\gamma_j}$.
Define $\varepsilon$ by 
\[
  \varepsilon_i := 
\begin{cases}
  1-\gamma_i & \text{if } i=j,\\
  \delta_i & \text{if } x_i=x_i^{\delta_i}, i\neq j,\\
  1-\delta_i & \text{if } x_i\neq x_i^{\delta_i}, i\neq j.
\end{cases}
\]
As in the proof of part~\ref{lemmatwo}, for each $i$ we obtain $a_i^\varepsilon<x_i<b_i^\varepsilon$, and therefore, $x\not\in D_\varepsilon$.
Thus, $\varepsilon=\gamma$ or $\varepsilon=\delta$.
But, since $\varepsilon_j\neq\gamma_j$, we must have $\varepsilon=\delta$.
Thus, $\gamma_j=1-\delta_j$, and for all $i\neq j$, $x_i=x_i^{\delta_i}$.
Since $x\neq x_\delta$ we then must have $x_j\neq x_j^{\delta_j}$.
By repeating the above argument with $x_\delta$ instead of $x_\gamma$, we also obtain that for all $i\neq j$, $x_i=x_i^{\gamma_i}$.
It follows that $x\in\convex\{x_\gamma,x_\delta\}$, and $x_\gamma$ and $x_\delta$ differ in only one coordinate.
\end{proof}

\begin{proof}[Proof of Theorem~\ref{hollow}]
Note that the first part of the theorem follows from the second part, since $\interk{\mclass{S}}{2^d}$ does not hold in Example~\ref{ex1}.
By compactness, we only have to prove the theorem for finite $\mclass{S}$.
We assume that $\interk{\mclass{S}}{2^d-1}$.
Let $B=\bigcap_{D\in\mclass{S}} \convex D = \prod_{i=1}^d[x_i^0,x_i^1]$.
(Since any two $D$s intersect, $x_i^0\leq x_i^1$ for all $i$.)
We denote the vertices of $B$ by $x_\varepsilon$, $\varepsilon\in\{0,1\}^d$, as in Lemma~\ref{intersection}.
We now show that if $x_\varepsilon\not\in\bigcap_{D\in\mclass{S}} D$ for all $\varepsilon$, then $\mclass{S}$ is as in Example~\ref{ex1}.

For each $\varepsilon$, choose $D_\varepsilon=\bd\prod_{i=1}^d[a_i^\varepsilon,b_i^\varepsilon]\in\mclass{S}$ such that $x_\varepsilon\notin D_\varepsilon$, and let
\[
  X_\varepsilon:=\{x_\delta:\delta\in\{0,1\}^d, x_\delta\not\in D_\varepsilon\}.
\]
Then $X_\varepsilon=\{x_\delta:\delta\text{ matches }\rho_\varepsilon\}$, where $\rho_\varepsilon=\rho_1\dots\rho_d$ is the pattern defined by
\[\rho_i:=
\begin{cases}
0 &\text{ if } a_i^\varepsilon<x_i^0 \text{ and }x_i^1=b_i^\varepsilon,\\
1 &\text{ if } a_i^\varepsilon=x_i^0 \text{ and }x_i^1<b_i^\varepsilon,\\
\ast &\text{ if } a_i^\varepsilon<x_i^0 \text{ and }x_i^1<b_i^\varepsilon.
\end{cases}
\]
Thus $\mclass{C}:=\{\rho_\varepsilon:\varepsilon\in\{0,1\}^d\}$ is a cover of $\{0,1\}^d$.
If $\rho_\varepsilon=\rho_{\varepsilon'}$, then $x_{\varepsilon'}\not\in D_\varepsilon$, so we may choose the $D_\varepsilon$s such that if $\rho_\varepsilon=\rho_{\varepsilon'}$, then $D_\varepsilon=D_{\varepsilon'}$.
We now write $D_\rho$ for $D_\varepsilon$ whenever $\rho=\rho_\varepsilon\in\mclass{C}$.
Let $\mclass{C}'$ be a minimal cover contained in $\mclass{C}$.
For each $\varepsilon\in\{0,1\}^d$ there now exists a $\rho\in\mclass{C}'$ matching $\varepsilon$ such that $x_\varepsilon\not\in D_\rho$.
Applying Lemma~\ref{intersection}.1 to $\{D_\rho:\rho\in\mclass{C}'\}$, we find $B\cap\bigcap_{\rho} D_\rho =\emptyset$.
Let $J\subseteq\{1,\dots,d\}$ be the set of positions in which there are only $\ast$s in $\mclass{C'}$.
For each $j\in J$, choose $D_j^0=\bd\prod_{i=1}^d[r_i^j,s_i^j]$ and $D_j^1=\bd\prod_{i=1}^d[t_i^j,u_i^j]$ from $\mclass{S}$ such that $r_j^j=x_j^0$ and $u_j^j=x_j^1$ (which is possible since $\mclass{S}$ is finite).
Since (by Lemma~\ref{minimal1}) for each $i\not\in J$ there exist $\rho,\rho'\in\mclass{C}'$ such that $\rho_i=0$ and $\rho_i'=1$, we obtain
\[
\bigcap_{j\in J}(\convex D_j^0\cap \convex D_j^1) \cap \bigcap_{\rho\in\mclass{C}'} \convex D_\rho = B.
\]
Thus, letting $\mclass{T}:=\{D_\rho: \rho\in\mclass{C}'\}\cup\{D_j^0,D_j^1:j\in J\}$, we obtain $\bigcap_{D\in\mclass{T}} D=\emptyset$.
Thus, $\#\mclass{T}\geq 2^d$.
Also, $\#\mclass{T}\leq\#\mclass{C}'+2\#J$.
Thus, by Lemma~\ref{minimal2}, $\mclass{C}'=\{0,1\}^d$.
It follows that $x_\delta\not\in D_\varepsilon$ iff $\delta=\varepsilon$.
Thus, all $x_\varepsilon$s are distinct, and $B$ is full-dimensional.
Also, $J=\emptyset$ and $B=\bigcap_\varepsilon\convex D_\varepsilon$.
In fact, if we take any $\varepsilon$ and $\varepsilon'$ which differ in each position, then $B=\convex D_\varepsilon\cap\convex D_{\varepsilon'}$.

We already have that $\mclass{S}$ satisfies \eqref{onep} and \eqref{twop} in Example~\ref{ex1}.
Consider any $D\in\mclass{S}$ with $D\neq D_\varepsilon$ for all $\varepsilon$.
Suppose there exist distinct $\gamma,\delta$ such that $x_\gamma,x_\delta\not\in D$.
By Lemma~\ref{intersection}.\ref{lemmathree}, $D\cap B\cap\bigcap_{\varepsilon\neq\gamma,\delta}D_\varepsilon = \emptyset$.
But there exist $\varepsilon,\varepsilon'\not\in\{\gamma,\delta\}$ differing in each position.
Thus $\bigcap_{\varepsilon\neq\gamma,\delta} D_\varepsilon\subseteq B$, and $D\cap\bigcap_{\varepsilon\neq\gamma,\delta}D_\varepsilon = \emptyset$, contradicting $\interk{\mclass{S}}{2^d-1}$.
Thus $D$ contains all $x_\varepsilon$s, except at most one, and \eqref{threep} is satisfied.
\end{proof}

\begin{proof}[Proof of Theorem~\ref{rectangles}]
Proceeding as in the proof of Theorem~\ref{hollow}, we assume that $\interk{\mclass{S}}{4}$ holds, and that no vertex of $B$ is in $\bigcap_{D\in\mclass{S}}D$, and obtain $\mclass{C}'=\{\ast\ast\}$ and $\#\mclass{T}=5$.

We now show that $\mclass{S}$ is as in Example~\ref{ex2}.
Since $\mclass{C}'=\{\ast\ast\}$, there is only one $D_\rho$, say $D=D_{\ast\ast}$, which is disjoint from $B$.
Also, $\mclass{T}=\{D_1^0,D_1^1,D_2^0,D_2^1,D\}$, with the $D_j^i$s as in the proof of Theorem~\ref{hollow}.
Thus $\bigcap_{i,j} \convex D_j^i = B$.

Suppose that for each $\varepsilon\in\{0,1\}^2$ there exists a $D_j^i$ not containing $x_\varepsilon$.
Then by Lemma~\ref{intersection}.\ref{lemmaone}, $\bigcap_{i,j}D_j^i=\emptyset$, contradicting $\interk{\mclass{S}}{4}$.
Thus, some $x_\varepsilon\in\bigcap_{i,j}D_j^i$, say $x_{00}$.

Suppose that $B$ is two-dimensional, i.e.\ $x_1^0<x_1^1$ and $x_2^0<x_2^1$.
Then, since $x_{00}\in D_1^1$, $D_1^1$ contains at least two sides of $B$, and it follows that $B=\convex D_1^1\cap\convex D_2^0\cap\convex D_2^1$ or $B=\convex D_1^1\cap\convex D_1^0\cap\convex D_2^1$.
Thus $D_1^1\cap D_2^0\cap D_2^1\cap D=\emptyset$ or $D_1^1\cap D_1^0\cap D_2^1\cap D=\emptyset$, both cases contradicting $\interk{\mclass{S}}{4}$.

Suppose $B$ is one-dimensional, say $x_1^0<x_1^1$ and $x_2^0=x_2^1$.
Then $D_2^0\cap D_2^1$ is a horizontal segment containing $B$.
If $D_1^1$ intersects $D_2^0\cap D_2^1$ only in $x_{00}$ and $x_{10}$, then $D_1^1\cap D_2^0\cap D_2^1\cap D=\emptyset$, a contradiction.
Thus, $B\subseteq D_1^1$.
We may assume that $D_2^1$ and $D_1^1$ are on opposite sides of $B$ (otherwise consider $D_2^0$ and $D_1^1$).
Then $D_1^0\cap D_1^1\cap D_2^1\subseteq B$ and $D_1^0\cap D_1^1\cap D_2^1\cap D=\emptyset$, a contradiction.

Thus $B$ is zero-dimensional, say $B=\{p\}$, where $p=x_{00}=(x_1,x_2)$ and $x_1=x_1^0=x_1^1$, $x_2=x_2^0=x_2^1$.
Then $D_1^0\cap D_1^1$ is a vertical line segment through $p$ which must intersect $D_2^0\cap D$ in a point $b\neq p$, and $D_2^1\cap D$ in a point $a\neq p$.
Similarly, $D_1^0\cap D_1^1$ is a horizontal segment through $p$ which must intersect $D_1^0\cap D$ in a point $d\neq p$, and $D_1^1\cap D$ in a point $c\neq p$.
See Figure~\ref{points}.
\begin{figure}
\includegraphics{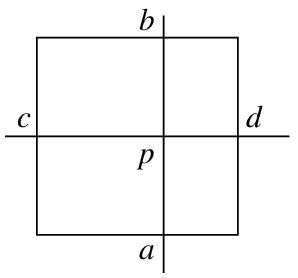}
\caption{}\label{points}
\end{figure}
Now $\mclass{S}$ already satisfies \eqref{one} and \eqref{four} of Example~\ref{ex2}, if we take $B$ there as $\convex D$.

Consider any $E\in\mclass{S}\setminus\mclass{T}$.
By considering the intersection of three sets at a time from $\mclass{T}$, we see that $E$ must intersect each of the sets $\{a,b\}$, $\{c,d\}$, $\{p,a\}$, $\{p,b\}$, $\{p,c\}$, $\{p,d\}$. 
If $p\not\in E$, then $a,b,c,d\in E$, and $E=D$, a contradiction.

Thus $p\in E$, and \eqref{two} is satisfied.
Also, $a\in E$ or $b\in E$.
We may assume without loss that $a\in E$, and similarly, $c\in E$.
But then, since $E\cap D\cap D_2^0\cap D_1^0\neq\emptyset$, we must have either $b\in E$ or $d\in E$, and \eqref{three} is satisfied.
It follows that $\mclass{S}$ is as in Example~\ref{ex2}.
\end{proof}

\section*{Acknowledgement}
This paper is based on part of the author's PhD thesis written under supervision of Prof.\ W. L. Fouch\'e at the University of Pretoria.
I thank the referee for pointing out a few small errors in a previous version of this paper.

\end{document}